\begin{document}

\newtheorem{theorem}{Theorem} 
\newtheorem{corollary}[theorem]{Corollary}
\newtheorem{lemma}[theorem]{Lemma}
\newtheorem{proposition}[theorem]{Proposition}

\title{Normal forms for saddle-node bifurcations: Takens' coefficient and applications in climate models}
\author[$\dagger$]{P.A.~Glendinning}
\author[$\ddagger$]{D.J.W.~Simpson}
\affil[$\dagger$]{Department of Mathematics, University of Manchester, Manchester, UK}
\affil[$\ddagger$]{School of Mathematical and Computational Sciences, Massey University, Palmerston North, New Zealand}

\maketitle


\begin{abstract}

We show that a one-dimensional differential equation depending on a parameter $\mu$ with a saddle-node bifurcation at $\mu =0$ can be modelled by an extended normal form $\dot y = \nu (\mu )-y^2+a(\mu )y^3$, where the functions $\nu$ and $a$ are solutions to equations that can be written down explicitly. The equivalence to the original equations is a local differentiable conjugacy on the basins of attraction and repulsion of stationary points in the parameter region for which these exist, and is a differentiable conjugacy on the whole local interval otherwise. (Recall that in standard approaches local equivalence is topological rather than differentiable.) The value $a(0)$ is Takens' coefficient from normal form theory.

The results explain the sense in which normal forms extend away from the bifurcation point and provide a new and more detailed characterisation of the saddle-node bifurcation. The one-dimensional system can be derived from higher dimensional equations using centre manifold theory. We illustrate this using two examples from climate science and show how the functions $\nu$ and $a$ can be determined analytically in some settings and numerically in others.

\end{abstract}

\section{Introduction}\label{section:intro}
The saddle-node bifurcation is
one of the two generic codimension-one
bifurcations of stationary points of differential equations
(the other being a Hopf bifurcation).
It occurs at parameters where the Jacobian matrix of a stationary point has a zero eigenvalue and two genericity conditions hold.
Near a saddle-node bifurcation the local
dynamics can be reduced to 
a one-dimensional differential equation on the associated extended centre manifold \cite{GH1983, Kuz1995}.

As parameters are varied to pass through the bifurcation,
a pair of stationary points is created or destroyed.
Restricted to the centre manifold, one stationary point is stable while the other is unstable.
On the centre manifold the system
is locally topologically equivalent (in ways that will be made more explicit in 
section~\ref{sect:basics}) to the truncated normal form
\begin{equation}\label{eq:tnf}
\dot{y} = \nu - y^2.
\end{equation}
This truncated normal form is often introduced via coordinate transformations which push the other terms in the Taylor expansion of the family of systems 
beyond quadratic.
These coordinate transformations can be given explicitly \cite[section 3.3]{Kuz1995}, yet the 
local equivalence is topological rather than differentiable except in the non-hyperbolic
case when the eigenvalue of the Jacobian matrix is zero. In this case successive coordinate changes can be made to remove
nonlinear terms of all orders except quadratic and cubic \cite{Takens1973}, see also section~\ref{sect:takens} below. This begs an obvious question. Why is
it possible to obtain stronger results (differentiable rather than topological) in the apparently
more exceptional non-hyperbolic case?

In this paper we show that the differentiable result
extends beyond the bifurcation value
provided a parameter-dependent cubic term is added to (\ref{eq:tnf}):
\begin{equation}\label{eq:nf}
\dot{y} = \nu(\mu) - y^2 + a(\mu) y^3.
\end{equation} 
For this modified equation the equivalence to the original equation restricted to the basin of attraction
of the stable stationary point and, separately, to the basin of repulsion of the unstable stationary point is differentiable provided the parameter-dependent coefficients $\nu$ and $a$ are chosen appropriately.

Here and below we assume a reduction to the centre manifold has been made,
so stability statements refer to stability restricted to the centre manifold.
To be more precise, suppose the restriction to the centre manifold is
\begin{equation}
\dot{x} = f(x,\mu),
\label{eq:f}
\end{equation}
and the saddle-node bifurcation occurs at $x=0$ when $\mu=0$.
Then $f(0,0) = f_x(0,0) = 0$
and for genericity we require $f_\mu(0,0) \ne 0$ and $f_{xx}(0,0) \ne 0$.
Without loss of generality, by changing of sign of the parameter and variable where necessary, we assume
\begin{equation}\label{eq:gen}
f(0,0)=0, \quad f_x(0,0)=0, \quad f_\mu (0,0)>0, \quad f_{xx}(0,0)<0.
\end{equation}
As shown in section~\ref{sect:normalforms}, to obtain a differentiable conjugacy between \eqref{eq:nf} and \eqref{eq:f}
it is necessary to have
\begin{equation}\label{eq:a0}
a(0)=\frac{2f_{xxx}}{3f_{xx}^2},
\end{equation}
evaluated at $x=\mu=0$.
This is exactly the quantity
obtained for the non-hyperbolic case $\mu=0$ by Takens \cite{Takens1973}, see section~\ref{sect:takens},
so we refer to \eqref{eq:a0} as \emph{Takens' coefficient} for the saddle-node bifurcation.
It provides a quantitative measure of how far the system is from the truncated normal form 
(\ref{eq:tnf}) and breaks the time-reversing symmetry $(y,t)\to (-y,-t)$ of \eqref{eq:tnf}.
It also reflects the degree of asymmetry in the two branches of stationary points in bifurcation diagrams,
and a large value of $a(0)$ indicates proximity to a cusp bifurcation.

The remainder of this paper is organised as follows.
In section~\ref{sect:takens} we revisit the formal power series approach
to differentiable equivalence at $\mu = 0$ and Takens' theorem for non-hyperbolic stationary points.
Then in section~\ref{sect:basics} we state and prove the main theoretical result on local equivalence.
Next we adapt the approach of \cite{GS2022} for bifurcations of maps to the continuous-time setting.
In section~\ref{sect:diffcon} we treat the $\mu < 0$ case that no stationary points exist locally,
then in section~\ref{sect:normalforms} treat the $\mu > 0$ case by the matching eigenvalues of the two stationary points to those of the
extended normal form \eqref{eq:nf} from which we see how the functions $\nu$ and $a$ are to be determined from $f$.

To illustrate the results we have chosen two applications in climate modelling.
In section~\ref{sect:Fraedrichs} we consider a simple temperature model of Fraedrich \cite{Fraedrich} and
show how Takens' coefficient can be expressed in terms of physical attributes of the system.
Then in section~\ref{sect:example2} we consider a version of Stommel's box model for ocean circulation \cite{Stommel1961}.
This model is two-dimensional but slow-fast which allows us to obtain a one-dimensional
approximation for which Takens' coefficient can be evaluated explicitly.
In section~\ref{sect:cm} we explain how the reduction to a centre manifold can be performed
and in section~\ref{sect:stommelcm} use this evaluate Takens' coefficient for Stommel's model numerically.
Finally section~\ref{sect:conclusion} provides concluding remarks.

\section{Coordinate transformations: power series}\label{sect:takens}

In standard normal form analysis the first step is to simplify \emph{when parameters are zero}
and then use a versal deformation argument to introduce general parameters to obtain
local behaviour \cite{GH1983,Wiggins2003}.
Coordinate changes are not performed away from the bifurcation point;
instead topological equivalence arguments are used, often without detailed justification.
On the other hand, to analyse saddle-node bifurcations using the implicit function theorem, the full strength of normal form theory is unnecessary and a shift of origin and scaling are used to remove the linear term in $x$ from the Taylor expansion of the differential equation and then it is pointed out that the resulting system is locally topologically equivalent to the truncated normal form (\ref{eq:tnf}), see for example \cite{Kuz1995}.

In this section we will recall the 
changes of coordinate at the bifurcation point for the saddle-node bifurcation
leading to Takens' normal form theorem (Theorem~\ref{thm:takens} below).

\subsection{Scaling the quadratic coefficient}

With $\mu = 0$ the general system \eqref{eq:f} with \eqref{eq:gen} has the form
\[
\dot{x} = \frac{1}{2} f_{xx} x^2 + \frac{1}{6} f_{xxx} x^3 + O(x^4),
\]
where here, and unless otherwise stated, derivatives are evaluated at $x = \mu = 0$.
In order to reach the modified normal form \eqref{eq:nf} with $\nu = 0$,
we first perform a linear change of coordinates, $y = \alpha x$.
We have
\begin{align*}
\dot{y} &= \alpha \left( \frac{1}{2} f_{xx} x^2 + \frac{1}{6} f_{xxx} x^3 \right) + O(x^4) \\
&= \frac{1}{2 \alpha} f_{xx} y^2 + \frac{1}{6 \alpha^2} f_{xxx} y^3 + O(y^4).
\end{align*}
Thus the choice $\alpha = -\frac{1}{2} f_{xx}$ yields
\begin{equation}
\dot{y} = -y^2 + a y^3 + O(y^4), \quad a =\frac{2f_{xxx}}{3f_{xx}^2}.
\label{eq:quadraticMatched}
\end{equation}
Already this simple calculation shows the origin of Takens' coefficient.
Notice our coordinate change $y = \alpha x$ is orientation-preserving
because $f_{xx} < 0$ by assumption.

\subsection{Persistence of the cubic term}

To the system \eqref{eq:quadraticMatched}
it is instructive to try and remove the cubic term via a subsequent coordinate change of the form
\[
z = y + \beta y^2.
\]
This inverts to $y = z - \beta z^2 + O(z^3)$, so
\begin{align*}
\dot{z} &= (1 + 2 \beta y) \dot{y} \\
&= -y^2 + (a - 2 \beta) y^3 + O(y^4) \\
&= -z^2 + a z^3 + O(z^4),
\end{align*}
and notice $\beta$ is absent from the cubic term.
Thus the cubic term cannot be removed by this change of coordinates.

\subsection{Removal of quartic and higher order terms}

However, it is possible to remove higher order terms. 
Write \eqref{eq:quadraticMatched} as
\[
\dot{y} = -y^2 + a y^3 + b y^k + O(y^{k+1}),
\]
where $k \ge 4$.
Then with
\[
z = y + \beta y^{k-1}, \quad
\text{so}~y = z - \beta z^{k-1} + O(z^k),
\]
we have
\begin{align*}
\dot{z} &= \left( 1 + \beta (k-1) y^{k-2} \right) \dot{y} \\
&= -y^2 + a y^3 + \left( b - \beta (k-1) \right) y^k + O(y^{k+1}) \\
&= -z^2 + a z^3 + \left( b - \beta (k-3) \right) z^k + O(z^{k+1}).
\end{align*}
Since $k>3$ we can choose
\[
\beta = \frac{b}{k-3}
\]
which removes the $z^k$ term.
This can be repeated for successively larger values of $k$ removing all
nonlinear terms of order four and above.

\subsection{Takens' theorem and general remarks}

It is one thing to show that there is a formal 
power series which removes all
terms higher than cubic,
it is quite another to show that this
formal power series converges on a neighbourhood of the stationary point. Takens resolved this issue
in the theorem quoted below. The aim of this paper is to determine the correct formulation which allows
us to accommodate $\mu \ne 0$.

\begin{theorem}(Takens \cite{Takens1973}) If $f$ is $C^\infty$ and satisfies \eqref{eq:gen}
then on a neighbourhood of zero the sequence of coordinate transformations defined
above converges to a $C^\infty$ change of 
variables in which the equation takes the normal form of
\begin{equation}\label{eq:Takenscoeff}
\dot{y} = -y^2 + a y^3, \quad a = \frac{2f_{xxx}}{3f_{xx}^2}.
\end{equation}
\label{thm:takens}\end{theorem}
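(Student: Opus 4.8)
The plan is to separate the construction into its formal part and its analytic part. The formal part is already implicit in the calculations above: each transformation $z = y + \beta_k y^{k-1}$ alters only the coefficients of order $k$ and higher and fixes the quadratic and cubic coefficients, so the coefficients of the composition $\Phi_N = \phi_N \circ \cdots \circ \phi_4$ stabilise order by order. Their limit defines a formal power series $\hat\psi(y) = y + O(y^3)$ conjugating \eqref{eq:quadraticMatched} to $N(y):=-y^2+ay^3$ of \eqref{eq:Takenscoeff}; equivalently, $\hat\psi$ is the unique formal solution, normalised by $\hat\psi'(0)=1$ and the absence of a quadratic term, of the conjugacy equation $\hat\psi'(y)\,g(y) = N(\hat\psi(y))$, where $g$ is the right-hand side of \eqref{eq:quadraticMatched}. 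Uniqueness holds because at each order $k\ge 4$ the coefficient $\beta_k$ is forced by the nonzero divisor $k-3$ (the absence of a quadratic term removes the residual freedom coming from time shifts); crucially these divisors grow without bound, so the situation is of Poincar\'e type rather than Siegel type and there are no small divisors.

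The substance of the theorem is to realise $\hat\psi$ by a genuine $C^\infty$ change of variables, which I would do by a dynamical transit-time argument rather than a term-by-term summation. For $y>0$, and separately for $y<0$, both $g$ and $N$ are nonvanishing and generate flows, and I would compare their regularised transit functions near the origin. Writing $\tfrac{1}{g(y)} = -\tfrac{1}{y^2}-\tfrac{a}{y}+\tau(y)$ with $\tau$ of class $C^\infty$, the primitive is $P_g(y)=\tfrac{1}{y}-a\log|y|+R_g(y)$ with $R_g\in C^\infty$ and $R_g(0)=0$, and likewise $P_N(y)=\tfrac{1}{y}-a\log|y|+R_N(y)$. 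The decisive observation is that the coefficient of $\log|y|$ equals $-a$ for \emph{both} fields, because it is the residue of $\mathrm{d}y/g$ at the origin and this residue is exactly the formal invariant singled out in \eqref{eq:a0}. Defining $\psi$ implicitly by $P_N(\psi(y))=P_g(y)$ and substituting $\psi(y)=y(1+\sigma(y))$, the logarithmic terms cancel identically; multiplying through by $y$ yields $G(y,\sigma)=0$ with $G$ of class $C^\infty$, $G(0,0)=0$ and $\partial_\sigma G(0,0)=-1$. The implicit function theorem then produces a $C^\infty$ function $\sigma$ vanishing at $0$, hence a $C^\infty$ map $\psi(y)=y+O(y^2)$ on a full two-sided neighbourhood with $\psi'(0)=1$; the single integration constant (equivalently, a time shift) is then fixed so that $\psi(y)=y+O(y^3)$.

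It remains to identify $\psi$ with the limit of the sequence. Since $\psi$ satisfies $\psi'g=N\circ\psi$ by construction, its Taylor series at $0$ is a formal solution of the conjugacy equation with the same normalisation, and so it coincides with $\hat\psi$ by the uniqueness established above. Hence the jets of the partial compositions $\Phi_N$ converge to the infinite jet of the $C^\infty$ diffeomorphism $\psi$, and $\psi$ is the asserted normalising change of variables. When $f$ is analytic the data $\tau,R_g,R_N,G$ are analytic and the analytic implicit function theorem gives an analytic $\psi$, so the series $\hat\psi$ converges on a neighbourhood and $\Phi_N\to\psi$ in the $C^\infty$ topology there.

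I expect the main obstacle to be the behaviour exactly at the origin. Away from $y=0$ the construction is routine on each side, but one must show that the one-sided solutions on $y>0$ and $y<0$ assemble into a single map that is $C^\infty$ across $0$ and carries the prescribed infinite jet. Everything hinges on the exact cancellation of the $\log|y|$ singularities, which is available \emph{only} because the cubic coefficient $a$ of the target matches that of $g$; this is the analytic reason that \eqref{eq:a0} is forced and that the quadratic and cubic data are precisely the right invariants. In the genuinely non-analytic $C^\infty$ setting the common Taylor series need not converge, so convergence of the sequence has to be read as convergence of jets together with the existence of a smooth limit realising them, and verifying that the smooth implicit-function solution carries exactly this infinite jet is the delicate step.
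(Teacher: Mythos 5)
Your proposal cannot be checked line-by-line against the paper for a simple reason: the paper does not prove Theorem~\ref{thm:takens} at all. It is quoted from Takens \cite{Takens1973}, and sections 2.1--2.3 supply only the formal, order-by-order part that your first paragraph recapitulates (your divisor bookkeeping is consistent with the paper's: the coefficient $\beta$ at map order $k-1$ meets the divisor $k-3$ at field order $k$, with the quadratic map coefficient the free time-shift parameter). What you have written is therefore a genuine, self-contained proof where the paper defers to the literature, and the analytic mechanism you use --- the classical time-coordinate classification of one-dimensional vector-field germs --- checks out. Specifically: $\tau := \tfrac{1}{g} + \tfrac{1}{y^2} + \tfrac{a}{y}$ is smooth at $0$ because the numerator $y^2 + g(y)(1+ay)$ vanishes to fourth order while $g(y)y^2$ has a nonzero fourth-order coefficient (Hadamard's lemma); the $\log|y|$ coefficients of $P_g$ and $P_N$ agree precisely because the residue of $dy/g$ at the origin is $-a$ with $a$ Takens' coefficient \eqref{eq:a0}, which is the analytic counterpart of the formal persistence of the cubic term in section 2.2; your function $G$ satisfies $G(0,0)=0$, $\partial_\sigma G(0,0)=-1$ as claimed, so the implicit function theorem applies at $(0,0)$; differentiating $P_N(\psi(y))=P_g(y)$ gives $\psi' g = N\circ\psi$ for $y\ne 0$ and hence, by continuity, on a full neighbourhood; and composing with the time-$s$ flow of $N$ (a self-conjugacy of $N$ with quadratic jet coefficient $-s$) removes the quadratic term, after which formal uniqueness identifies the infinite jet with $\hat\psi$. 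Two remarks. First, the worry in your final paragraph is already discharged by your own construction: because the single implicit-function-theorem equation is posed jointly in $(y,\sigma)$ at $(0,0)$, the solution $\sigma$ is smooth on a two-sided neighbourhood, so no separate gluing of one-sided solutions across the origin is needed, and the jet is carried automatically since the smooth solution satisfies the conjugacy equation identically, forcing its Taylor series to be a formal solution. Second, your reading of ``converges'' as convergence of jets realised by a smooth limit is the honest interpretation in the genuinely $C^\infty$ case (the polynomial compositions need not converge as functions), and it matches what Takens actually proves, namely existence of a $C^\infty$ normalising change of variables. Compared with citing \cite{Takens1973}, your route buys an elementary, dimension-one-specific argument that also yields the analytic case for free; Takens' methods are heavier but apply to the more general singularities treated in his paper.
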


We are not aware of explicit $C^k$ versions of this theorem, but there are corresponding discrete time formulations \cite{KCG1990},
see also \cite{GS2022}.

\subsection{Bifurcation Theorems}
In higher codimension problems, for example the Takens-Bogdanov bifurcation, and in some approaches to the 
Hopf bifurcation, a normal form is used at the bifurcation point and then a versal deformation argument is used to 
identify those small low-order terms (\emph{unfoldings}) in parameter and phase space
that imply all the topological behaviours close to the bifurcation point are realised \cite{GH1983, Kuz1995, Wiggins2003}.
For the simple codimension-one bifurcations this approach is not necessary because
topological equivalence is such a weak requirement.
Presumably it is for this reason that Theorem~\ref{thm:takens} is not often stated in the literature.  

\section{Equivalences and Conjugacy}\label{sect:basics}

Since formal proofs of even the local topological conjugacy results of bifurcation theorems are rarely given in textbooks, it is worth gathering together the definitions and conjugacy results needed in the remainder of this paper, and extending them where necessary.

\subsection{Smooth conjugacies}
Let $U, V \subset \mathbb{R}$ be open intervals
and $f: U \to \mathbb{R}$ and $g: V \to \mathbb{R}$ be $C^k$ with $k \ge 2$.
The differential equations
\[
\dot{x} = f(x), \quad \dot{y} = g(y),
\]
are said to be {\em $C^r$-conjugate} ($r \ge 1$)
if there exists a $C^r$ diffeomorphism $h : U \to V$ such that
\begin{equation}
g(h(x)) = h'(x)f(x), \quad \text{for all}~x \in U.
\label{eq:conjcondAlternate}
\end{equation}
This expresses the fact that $y = h(x)$ is a change of coordinates with
\[
\dot{y} = h'(x) \dot{x} = h'(x) f(x) = g(h(x)) = g(y).
\]
Now let $\phi_t(x)$ and $\psi_t(y)$ denote the flows induced by $\dot{x} = f(x)$ and $\dot{y} = g(y)$ respectively.
An equivalent formulation of \eqref{eq:conjcondAlternate} is
\begin{equation}\label{eq:conjcond}
h(\phi_t(x)) = \psi_t(h(x)).
\end{equation}

\subsection{Linearisation}

If $x^*$ is a stationary point of $f$ then $y^* = h(x^*)$ is a stationary point of $g$.
By differentiating \eqref{eq:conjcondAlternate} and setting $x = x^*$ we see that
the two stationary points have the same stability coefficient, i.e.
\[ f'(x^*) = g'(y^*).
\]

\begin{theorem}($C^k$-linearisation theorem)
Suppose $f : U \to \mathbb{R}$ is $C^k$ ($k \ge 2$)
and $x^* \in U$ is a stationary point of $\dot{x} = f(x)$
with $f'(x^*) = \lambda \ne 0$.
Then there exist neighbourhoods
$U_0 \subseteq U$ of $x^*$ and $V_0 \subset \mathbb{R}$ of $0$
such that $\dot x =f(x)$ on $U_0$ is $C^k$-conjugate to $\dot{y} = \lambda y$ on $V_0$.
\end{theorem}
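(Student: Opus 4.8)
The plan is to construct the conjugating diffeomorphism $h$ explicitly by solving the infinitesimal conjugacy condition \eqref{eq:conjcondAlternate}, which for the linear target $g(y) = \lambda y$ reads $\lambda h(x) = h'(x) f(x)$. After translating coordinates so that $x^* = 0$ (hence $f(0) = 0$ and $f'(0) = \lambda$), this is a first-order linear ODE for $h$ viewed as a function of $x$, and the whole construction reduces to a single integration once the singularity of $1/f$ at the stationary point is handled correctly.

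First I would apply Hadamard's lemma to write $f(x) = x g(x)$ with $g(x) = \int_0^1 f'(tx)\,dt$ of class $C^{k-1}$ and $g(0) = \lambda \neq 0$; shrinking $U$ to some $U_0$ we may assume $g$ is bounded away from zero there. Separating variables in $\lambda h = h' f$ gives $\frac{h'}{h} = \frac{\lambda}{f(x)} = \frac{1}{x}\cdot\frac{\lambda}{g(x)}$. The factor $P(x) := \lambda/g(x)$ is $C^{k-1}$ with $P(0)=1$, so writing $P(x) = 1 + x S(x)$ with $S \in C^{k-2}$ (Hadamard again) isolates the singular part as $\lambda/f(x) = 1/x + S(x)$. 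This motivates the definition
\[
h(x) = x\,\sigma(x), \qquad \sigma(x) = \exp\left(\int_0^x S(s)\,ds\right),
\]
for which $h(0) = 0$. A direct substitution then verifies the conjugacy equation: since $\sigma' = S\sigma$ one has $h'(x) = \sigma(x)\bigl(1 + x S(x)\bigr) = \sigma(x) P(x)$, and hence $h'(x) f(x) = \sigma(x) P(x)\, x g(x) = \lambda x \sigma(x) = \lambda h(x)$, as required. Because $h'(0) = \sigma(0)P(0) = 1$, the inverse function theorem makes $h$ a diffeomorphism from a neighbourhood $U_0$ of $0$ onto $V_0 = h(U_0)$, and by the equivalence of \eqref{eq:conjcondAlternate} and \eqref{eq:conjcond} noted above this infinitesimal identity is the same as the flow conjugacy.

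The step I expect to be the main obstacle is the regularity bookkeeping, where a naive derivative count comes up one short. Since $S$ is only $C^{k-2}$, the integral $\int_0^x S$ is $C^{k-1}$ and so $\sigma$ is a priori only $C^{k-1}$, which would give $h = x\sigma \in C^{k-1}$ rather than the claimed $C^k$. The resolution is to argue through the structural identity $h'(x) = \sigma(x) P(x)$: both $\sigma$ and $P$ are $C^{k-1}$, so their product $h'$ is $C^{k-1}$, and therefore $h$ is genuinely $C^k$. In other words the apparent loss of smoothness in $\sigma$ is cancelled once it is multiplied by $x$, and this cancellation must be exhibited via $h'$ rather than read off the factored form $x\sigma$ directly. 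I would close by emphasising that the construction uses $g \neq 0$ (equivalently $\lambda \neq 0$) in an essential way---it is precisely this hyperbolicity that keeps $P = \lambda/g$ smooth and nonvanishing and makes $1/f$ integrable after the explicit $1/x$ is removed---which explains why the same method breaks down, and a cubic correction term is forced, at the non-hyperbolic saddle-node point addressed later in the paper.
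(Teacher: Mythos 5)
Your proof is correct, but it takes a genuinely different route from the paper: the paper does not prove this theorem at all, it quotes it as Sternberg's result (Theorem 4 of the cited 1957 paper) and builds its later conjugacy results on top of it as a black box. Your argument is a self-contained, purely one-dimensional construction: factor $f(x)=x\,g(x)$ by Hadamard's lemma, split the singular integrand as $\lambda/f(x)=1/x+S(x)$, and exhibit the conjugacy explicitly as $h(x)=x\exp\bigl(\int_0^x S(s)\,ds\bigr)$. The verification $h'f=\sigma P\,xg=\lambda h$ is right, and your regularity repair is exactly the correct move: reading the smoothness of $h$ off the identity $h'=\sigma P$ (a product of two $C^{k-1}$ functions, so $h\in C^k$) rather than off the factored form $x\sigma$, which naively loses one derivative; combined with $h'(0)=1$ and the inverse function theorem, $h$ is a $C^k$ diffeomorphism with $C^k$ inverse, and the pointwise identity $\lambda h=h'f$ is precisely the conjugacy condition \eqref{eq:conjcondAlternate} for the target field $\lambda y$. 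What your approach buys: it is elementary and constructive, it normalises $h'(x^*)=1$, and it makes transparent that hyperbolicity ($g(0)=\lambda\ne 0$) is what keeps $P=\lambda/g$ smooth and the construction alive---which is a nice foreshadowing of why the non-hyperbolic case needs Takens' theorem instead. What the paper's citation buys is generality: Sternberg's theorem covers $\mathbb{R}^n$, where resonances among eigenvalues obstruct smooth linearisation, an issue the paper remarks on immediately after the statement but which simply never arises in your one-dimensional calculation. One cosmetic point: you use the letter $g$ both for the target vector field $g(y)=\lambda y$ and for the Hadamard factor in $f(x)=x\,g(x)$; rename one of them to avoid a clash.
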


This formulation is due to Sternberg \cite[Theorem 4]{Sternberg1957}.
In the general case of differential equations in $\mathbb{R}^n$ there are extra resonance conditions that need to hold between eigenvalues of the Jacobian matrices of $f$ and $g$ at the corresponding stationary points \cite{Belitskii1978, Sell1985,Sternberg1957}. In the one-dimensional case there is no resonance, and more generally some differentiability is possible even with resonance if $f$ is sufficiently smooth \cite{Guysinsky2003}.

Two differential equations satisfying the $C^k$-linearisation theorem for the same value of $\lambda \ne 0$
are both conjugate to $\dot{y} = \lambda y$, thus they are conjugate to each other.
That is, we have the following result.

\begin{corollary}
Suppose $f:U\to \mathbb{R}$ and $g:V\to \mathbb{R}$ are $C^k$ ($k \ge 2$) and
$\dot{x} = f(x)$ and $\dot{y} = g(y)$ have stationary points $x^* \in U$ and $y^* \in V$ with $f'(x^*) = g'(y^*) \ne 0$.
Then there exist neighbourhoods $U_0 \subseteq U$ of $x^*$ and $V_0 \subseteq V$ of $y^*$ such that
$\dot x=f(x)$ on $U_0$ and $\dot y =g(y)$ on $V_0$ are $C^k$-conjugate.
\label{cor:ckconj}\end{corollary}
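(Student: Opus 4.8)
The plan is to reduce the corollary to a transitivity argument built on the $C^k$-linearisation theorem just stated. The key observation is that both systems can be conjugated to the \emph{same} linear model $\dot z = \lambda z$, and $C^k$-conjugacy is an equivalence relation, so composing one conjugacy with the inverse of the other yields the desired conjugacy between $f$ and $g$.

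Concretely, I would proceed as follows. First, apply the $C^k$-linearisation theorem to $\dot x = f(x)$ at $x^*$: since $f'(x^*) = \lambda \neq 0$, there exist neighbourhoods $U_0 \subseteq U$ of $x^*$ and $W_1 \subset \mathbb{R}$ of $0$, together with a $C^k$ diffeomorphism $h_1 : U_0 \to W_1$, such that $\dot x = f(x)$ on $U_0$ is $C^k$-conjugate to $\dot z = \lambda z$ on $W_1$. Second, apply the same theorem to $\dot y = g(y)$ at $y^*$: because $g'(y^*) = \lambda$ is the \emph{same} nonzero value, there exist neighbourhoods $V_0 \subseteq V$ of $y^*$ and $W_2 \subset \mathbb{R}$ of $0$, and a $C^k$ diffeomorphism $h_2 : V_0 \to W_2$, conjugating $\dot y = g(y)$ on $V_0$ to $\dot z = \lambda z$ on $W_2$. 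Third, I would form the composition $h = h_2^{-1} \circ h_1$ on a suitably shrunk neighbourhood, and verify that it satisfies the conjugacy condition \eqref{eq:conjcondAlternate} with $f$ and $g$ by chaining the two intermediate conjugacy relations through the linear field $\lambda z$.

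The one genuine technical point, rather than a deep obstacle, is that the two linearising charts need not have overlapping images: $h_1$ maps onto $W_1$ while $h_2$ maps onto $W_2$, and the composition $h_2^{-1} \circ h_1$ is only defined where $h_1(U_0) \subseteq W_2$. I would handle this by restricting to $U_0' = h_1^{-1}(W_1 \cap W_2)$, an open neighbourhood of $x^*$ since $h_1(x^*) = 0 \in W_1 \cap W_2$, and correspondingly setting $V_0' = h_2^{-1}(W_1 \cap W_2)$, an open neighbourhood of $y^*$. On these shrunk domains $h = h_2^{-1} \circ h_1 : U_0' \to V_0'$ is a well-defined $C^k$ diffeomorphism as a composition of $C^k$ diffeomorphisms, and it maps $x^*$ to $y^*$.

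Finally, I would check the conjugacy identity. Writing $L(z) = \lambda z$, the first conjugacy gives $L(h_1(x)) = h_1'(x) f(x)$ on $U_0'$ and the second gives $L(h_2(y)) = h_2'(y) g(y)$ on $V_0'$; equivalently $g(y) = h_2'(y)^{-1} L(h_2(y))$. Substituting $y = h(x) = h_2^{-1}(h_1(x))$ and differentiating the relation $h_2(h(x)) = h_1(x)$ to obtain $h_2'(h(x)) h'(x) = h_1'(x)$, a direct substitution yields $g(h(x)) = h'(x) f(x)$, which is exactly \eqref{eq:conjcondAlternate}. This establishes that $\dot x = f(x)$ on $U_0'$ and $\dot y = g(y)$ on $V_0'$ are $C^k$-conjugate, completing the proof. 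The main subtlety throughout is purely the bookkeeping of nested neighbourhoods; there is no essential analytic difficulty once the equivalence-relation structure of $C^k$-conjugacy is recognised.
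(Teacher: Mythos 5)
Your proposal is correct and is exactly the paper's argument: the paper proves the corollary in one line by noting that both systems are $C^k$-conjugate to the same linear equation $\dot z = \lambda z$ via the linearisation theorem, hence conjugate to each other by transitivity. Your additional bookkeeping (intersecting the images $W_1 \cap W_2$ and pulling back, then verifying the chain rule identity) correctly fills in the details the paper leaves implicit.
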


\subsection{Extension to basins of attraction and repulsion}

We can now prove an adapted version of a theorem of Belitskii for discrete-time dynamical systems
that is key to the construction of the differentiable conjugacies for the bifurcations.
A similar result is given in \cite{GS2022} for the discrete-time setting.

\begin{theorem}
Suppose $f:U\to \mathbb{R}$ and $g:V\to \mathbb{R}$ are $C^k$ ($k \ge 2$) and
$\dot{x} = f(x)$ and $\dot{y} = g(y)$ have
precisely $n \ge 1$ stationary points,
$x_1 < x_2 < \cdots < x_n$ and $y_1 < y_2 < \cdots < y_n$ respectively,
with $f'(x_j) = g'(y_j) \ne 0$ for all $j \in \{ 1,2,\ldots,n \}$.
Write $U = (u_0,u_1)$ and $V = (v_0,v_1)$.
Then there exist $x_0 \in [u_0,x_1)$, $y_0 \in [v_0,y_1)$, $x_{n+1} \in (x_n,u_1]$, and $y_{n+1} \in (y_n,v_1]$ such that
$\dot{x} = f(x)$ on $(x_{j-1},x_{j+1})$ and 
$\dot{y} = g(y)$ on $(y_{j-1},y_{j+1})$ 
are $C^k$-conjugate for all $j \in \{ 1,2,\ldots,n \}$.
\label{thm:belit}\end{theorem}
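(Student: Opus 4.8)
The plan is to build, for each $j$, a conjugacy on $(x_{j-1},x_{j+1})$ by taking the local $C^k$-conjugacy available near $x_j$ and transporting it along the flow so as to fill out the whole interval. First I would record the sign structure. Since $f'(x_j)\ne 0$ each $x_j$ is a simple zero, and because $f$ has no further zeros in $U$ it keeps a constant nonzero sign on every open interval lying between two consecutive stationary points, or between the outermost stationary point and an endpoint of $U$. Hence on each such interval the flow $\phi_t$ is strictly monotonic and the interval is a single orbit: it is asymptotic as $t\to\pm\infty$ to a stationary point wherever one is present, and reaches an endpoint of $U$ in finite or infinite time otherwise. The same holds for $g$, $\psi_t$ and $V$. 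Corollary~\ref{cor:ckconj} then supplies, for each $j$, neighbourhoods $N_j\ni x_j$ and $M_j\ni y_j$ and a $C^k$-conjugacy $h_j^{\mathrm{loc}}:N_j\to M_j$ with $h_j^{\mathrm{loc}}(x_j)=y_j$.

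Next I would fix $j$ and extend $h_j^{\mathrm{loc}}$ by setting $h_j(\phi_t(p))=\psi_t(h_j^{\mathrm{loc}}(p))$ for $p\in N_j$ and all $t$ for which both sides stay in the relevant intervals. This is well defined: if $\phi_t(p)=\phi_s(p')$ with $p,p'\in N_j$ then $p,p'$ lie on the same side of $x_j$ (the flow cannot cross the fixed point), the orbit segment joining them stays in $N_j$, and the conjugacy relation~\eqref{eq:conjcond} for $h_j^{\mathrm{loc}}$ turns $\psi_s(h_j^{\mathrm{loc}}(p'))$ into $\psi_t(h_j^{\mathrm{loc}}(p))$. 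Because each of the two half-intervals abutting $x_j$ is a single orbit that meets $N_j$, every point of it has the form $\phi_t(p)$, so $h_j$ is defined on the whole half-interval. When a half-interval is bounded by two stationary points, the orbit and its image are both parametrised by all $t\in\mathbb{R}$, and $h_j$ maps it bijectively onto the corresponding half-interval of $g$ with no truncation; this disposes of the interior intervals $(x_{j-1},x_{j+1})$ for $2\le j\le n-1$ and of the inner halves $(x_1,x_2)$, $(x_{n-1},x_n)$ for $j=1,n$.

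The remaining issue is the outer halves, $(u_0,x_1)$ for $j=1$ and $(x_n,u_1)$ for $j=n$ (and when $n=1$ both halves are of this type), where one end is the boundary of $U$ rather than a stationary point. The time for $\phi_t$ to reach that boundary along the orbit need not match the time for $\psi_t$ to reach the corresponding boundary of $V$, so the naive extension cannot fill both outer halves at once. I would therefore truncate at whichever flow reaches its boundary first. Concretely, take $p_0\in N_1\cap(u_0,x_1)$ and $q_0=h_1^{\mathrm{loc}}(p_0)$, measure along each orbit in the direction heading towards the respective boundary, and let $T$ be the smaller of the two times at which the boundary is met (with $T=\infty$ allowed). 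Taking $x_0$ and $y_0$ to be the points reached at time $T$ gives $x_0\in[u_0,x_1)$ and $y_0\in[v_0,y_1)$, at least one equal to the corresponding boundary point, and $h_1$ restricts to a bijection of $(x_0,x_1)$ onto $(y_0,y_1)$; the points $x_{n+1},y_{n+1}$ are chosen symmetrically on the right.

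Finally I would check regularity. On a neighbourhood of $x_j$ the construction gives $h_j=h_j^{\mathrm{loc}}$, since $h_j^{\mathrm{loc}}$ already intertwines the flows there, so $h_j$ is $C^k$ at the stationary point; away from $x_j$ the flows $\phi_t,\psi_t$ are $C^k$ diffeomorphisms (as $f,g$ are nonzero there), so $h_j$ is $C^k$ as a composition, and the two descriptions overlap. The flow-time parametrisation makes $h_j$ strictly monotonic with nonvanishing derivative — this is inherited from $(h_j^{\mathrm{loc}})'(x_j)\ne 0$ at the stationary point and from the diffeomorphism property of the flows elsewhere — so $h_j$ is a $C^k$ diffeomorphism, and it satisfies~\eqref{eq:conjcond} by construction. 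I expect the main obstacle to be exactly the boundary synchronisation of the third paragraph: one must verify that truncating at the first exit time yields genuinely well-defined limit points $x_0,y_0$ (respectively $x_{n+1},y_{n+1}$) lying in the half-open intervals claimed, and that the resulting map remains a $C^k$ diffeomorphism up to, but not across, these freely chosen endpoints.
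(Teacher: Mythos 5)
Your proposal is correct and takes essentially the same route as the paper's proof: a local $C^k$-conjugacy from Corollary~\ref{cor:ckconj} is transported along the two flows, which automatically fills any interval bounded on both sides by stationary points, and the outer intervals are truncated at whichever flow reaches its domain boundary first, exactly how the paper defines $x_0$, $y_0$, $x_{n+1}$, $y_{n+1}$. Your explicit well-definedness check and allowance for an infinite exit time are slightly more careful than the paper's write-up, but the underlying argument is identical.
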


\begin{proof}
As above let $\phi_t(x)$ and $\psi_t(y)$
denote the flows generated by $f$ and $g$ respectively.
Choose any $j \in \{ 1,2,\ldots,n \}$ and suppose $f'(x_j) > 0$
(the case $f'(x_j) < 0$ can be treated similarly).
By Corollary \ref{cor:ckconj} there exist open neighbourhoods $(u_-,u_+)$ of $x_j$ and $(v_-,v_+)$ of $y_j$
and a $C^k$-conjugacy $h:(u_-,u_+) \to (v_-,v_+)$ between $f$ and $g$, see Fig.~\ref{fig:hODEBelitskii}.

Now suppose $j \notin \{ 1,n \}$ (the extremal cases will be treated at the end).
Our task is to extend $h$ if necessary to the whole of
$(x_{j-1},x_{j+1})$ and $(y_{j-1},y_{j+1})$.
Since $f^\prime (x_j) > 0$, $f(x)>0$ for $x\in (x_j,x_{j+1})$ and so
$\phi_t(x) \to x_{j+1}$ and $\psi_t(y) \to y_{j+1}$ as $t \to \infty$ for all $x \in (x_j,x_{j+1})$ and $y \in (y_j,y_{j+1})$.
If $u_+ = x_{j+1}$ we are done, since clearly $\lim_{x \uparrow x_{j+1}} h(x) = y_{j+1}$.
So suppose $u_+ < x_{j+1}$ and let $v_+ = h(u_+)$ and notice $v_+ < y_{j+1}$.

\begin{figure}[h!]
\begin{center}
\includegraphics[height=6cm]{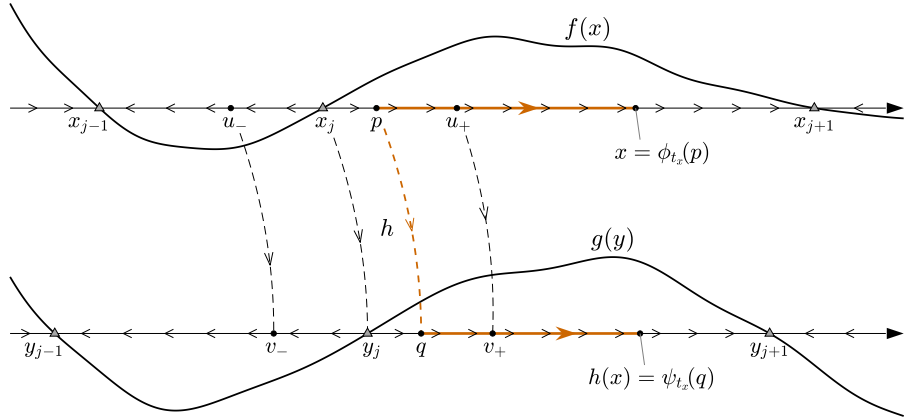}
\caption{
A sketch showing quantities introduced in the proof of Theorem \ref{thm:belit}. The path from $x$ backwards in time under $f$ to $p$, down to $q$ using $h$, and then forwards in time to $h(x)$ using $g$ shows how to extend the definition of $h$ via  \eqref{eq:defh}.
\label{fig:hODEBelitskii}
} 
\end{center}
\end{figure}

Let $p \in (x_j,u_+)$ and $q = h(p) \in (y_j,v_+)$.
For each $x \in (p,x_{j+1})$ there exists $t_x > 0$ such that $\phi_{t_x}(p) = x$
and clearly $t_x$ is an increasing function of $x$ with
$t_x \to \infty$ as $x \to x_{j+1}$.
Define $h(x)$ by using the two flows: from $x$ evolve back to $p$, transfer to $q = h(p)$, then evolve forward to $h(x)$, i.e.
\begin{equation}\label{eq:defh}
h(x) = \psi_{t_x}(q) = \psi_{t_x} \!\left( h \left( \phi_{-t_x}(x) \right) \right).
\end{equation}
This is illustrated in Fig.~\ref{fig:hODEBelitskii}.
Since the flows are $C^k$ (\cite{PdM1982}, Theorem 0.8), by construction $h(x)$ is $C^k$ and satisfies
the conjugacy condition (\ref{eq:conjcond}) for all $x \in (p,x_{j+1})$.
Thus we have extended $h$ to $(u_-,x_{j+1})$.
The same argument in $(x_{j-1},x_j)$ allows us to extend $h$ to $(x_{j-1},x_{j+1})$.
(If $f^\prime (x_j)<0$ then the argument is similar but with negative time.)
  
Finally we treat the case $j=n$ ($j=1$ is similar).
As above let $p \in (x_n,u_+)$ and $q = h(p) \in (y_n,v_+)$.
As before we only treat the case $f^\prime (x_n)>0$.
The main difference here is that orbits of $\dot{x} = f(x)$ reach the right endpoint of $U$ in finite time,
and similarly for $\dot{y} = g(y)$.
To accommodate this let $t_1, s_1 > 0$ be such that $\phi_{t_1}(p) = u_1$ and $\psi_{s_1}(q) = v_1$.
If $t_1 \le s_1$ let $x_{n+1} = u_1$ and $y_{n+1} = \psi_{t_1}(q) \le v_1$,
and if $t_1 > s_1$ let $x_{n+1} = \phi_{s_1}(p) \le u_1$ and $y_{n+1} = v_1$.
Then the construction \eqref{eq:defh}
produces a $C^k$ conjugacy for all $x \in (p,x_{n+1})$,
and as explained above this is readily extended to $(x_{n-1},x_{n+1})$.
\end{proof}

Note that the intervals $(x_{j-1},x_{j+1})$ are precisely the basins of attraction of $x_j$ if $f^\prime(x_j)<0$ or the basin of repulsion of $x_j$ if $f^\prime(x_j)>0$.
Hence another way to phrase Theorem~\ref{thm:belit} is that there are local $C^k$ conjugacies on the basins of attraction and repulsion of corresponding stationary points.

\section{Differentiable conjugacy and saddle-node bifurcations}
\label{sect:diffcon}

The saddle-node bifurcation is associated with differential equations with a one-dimensional centre 
manifold reduction on which the equation $\dot{x} = f(x,\mu)$ satisfies the conditions (\ref{eq:gen}). In this section 
we sketch the conditions that need to hold in order for $C^k$ conjugacies to exist,
with proofs for the cases with no periodic orbits locally. Then in section~\ref{sect:normalforms}
we apply these ideas to the normal form  (\ref{eq:nf}). 

By the implicit function theorem there exists a unique local branch of stationary points of the form
\begin{equation}\label{eq:IFTsp}
\mu =-\frac{f_{xx}}{2f_\mu}x^2+O(|x|^3).
\end{equation}
This has two solutions in $\mu>0$ and none in $\mu <0$ for the choice of $f_\mu >0$ 
and $f_{xx} < 0$ made in \eqref{eq:gen}.

Now consider two $C^k$ families of differential equations, $f$ parametrised by $\mu$, and $g$ 
parametrised by $\nu$, both of which satisfy (\ref{eq:gen}).
Let $\phi_t(x,\mu)$ and $\psi_t(y,\nu)$ denote their flows.
If $\mu$ and $\nu$ are positive then by Theorem~\ref{thm:belit} there are local conjugacies
if it is possible to
choose $\nu = N(\mu)$ such that the multipliers of the corresponding stationary points are equal.
This is not a trivial condition.
In the next section we use the implicit function theorem 
to prove that the coefficient $a$ of the normal form (\ref{eq:nf}) and the parameter $\nu$ can be chosen as functions 
of $\mu$ so that these conditions do hold and so Theorem~\ref{thm:belit} can be applied.

If instead $\mu=\nu =0$ then both systems satisfy Takens' theorem (Theorem~\ref{thm:takens}) provided $f$ and $g$ are $C^\infty$.
The following theorem accommodates the case in which $\mu$ and $\nu$ are negative.

\begin{theorem}Suppose that $f$ and $g$ are $C^k$ families of differential equations, $f$ parametrised by $\mu$, and $g$ 
parametrised by $\nu$, both of which satisfy (\ref{eq:gen}). If $\mu<0$ and $\nu <0$ then there exist neighbourhoods $U$ and $V$ of zero and a continuous function $N(\mu)$ with $N(0)=0$ such that $\dot x = f(x,\mu )$ on $U$ is $C^k$-conjugate to $\dot y =g(y, N(\mu ))$ on $V$.
\label{thm:negpars}\end{theorem}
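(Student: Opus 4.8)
The plan is to exploit the fact that for $\mu<0$ the equation $\dot x=f(x,\mu)$ has no stationary points near the origin, so its flow is monotone and is classified, up to $C^k$-conjugacy, by a single scalar invariant: the time needed to traverse the interval. Matching this transit time between the two families will pin down $N$. To begin I would fix neighbourhoods and record the absence of stationary points. By \eqref{eq:IFTsp} the unique local branch of stationary points lies in $\mu>0$, so for all sufficiently small $\mu<0$ the field $f(\cdot,\mu)$ has no zeros on a fixed interval $U=(-\delta,\delta)$. Since $f(0,\mu)=f_\mu(0,0)\,\mu+o(\mu)<0$, continuity forces $f(x,\mu)<0$ throughout $U$, and after shrinking $\delta$ and $\mu_0$ we may assume $f_\mu>0$ on $U\times(-\mu_0,0)$. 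The same holds for $g$ on a fixed interval $V=(-\epsilon,\epsilon)$ and $\nu\in(-\nu_0,0)$.

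Next I would reduce the conjugacy problem to matching a scalar. Because $f(\cdot,\mu)$ is nonvanishing and $C^k$ on $\overline U$, the map $u=\int_0^x ds/f(s,\mu)$ is a $C^k$ diffeomorphism conjugating $\dot x=f(x,\mu)$ on $U$ to the constant flow $\dot u=1$ on an interval of length $T_f(\mu)=\int_{-\delta}^{\delta} ds/|f(s,\mu)|$; similarly $g$ rectifies to $\dot u=1$ on an interval of length $T_g(\nu)=\int_{-\epsilon}^{\epsilon} ds/|g(s,\nu)|$. A conjugacy between two copies of $\dot u=1$ must have derivative $1$, hence be a translation, and therefore exists if and only if the two intervals have equal length. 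Thus $\dot x=f(x,\mu)$ on $U$ is $C^k$-conjugate to $\dot y=g(y,\nu)$ on $V$ precisely when $T_f(\mu)=T_g(\nu)$, the conjugacy being $h\big(\phi_t(x_*,\mu)\big)=\psi_t\big(y_*,\nu\big)$, which is $C^k$ because the flows are $C^k$ (\cite{PdM1982}, Theorem~0.8).

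It then remains to choose $N$ by inverting the transit time. Since $f_\mu>0$, the integrand $1/|f|$ increases with $\mu$, so $T_f$ is continuous and strictly increasing on $(-\mu_0,0)$; moreover the local form $f\approx f_\mu\mu+\tfrac12 f_{xx}x^2$ gives $T_f(\mu)\sim \pi/\sqrt{2\,|f_\mu f_{xx}|\,|\mu|}\to\infty$ as $\mu\to0^-$, and the same properties hold for $T_g$. I would then set $N=T_g^{-1}\circ T_f$ wherever it is defined: this is continuous, and because $T_g(\nu)\to\infty$ exactly as $\nu\to0^-$, the divergence $T_f(\mu)\to\infty$ forces $N(\mu)\to0^-$, so $N$ extends continuously with $N(0)=0$. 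Restricting to $\mu$ close enough to $0$ that $T_f(\mu)$ lies in the range of $T_g$ guarantees that $N(\mu)\in(-\nu_0,0)$ is well defined, and by construction $T_f(\mu)=T_g(N(\mu))$, so the conjugacy of the previous paragraph applies.

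The main obstacle is the step establishing $T_f,T_g\to\infty$ as the parameter tends to $0^-$, together with the invertibility of $T_g$: this divergence is precisely what the genericity conditions \eqref{eq:gen} supply, and it is exactly what allows the two fixed neighbourhoods to be matched through a single continuous function $N$ with $N(0)=0$. A secondary point requiring care is ensuring that the ranges of $T_f$ and $T_g$ overlap so that $N(\mu)$ remains in the admissible parameter interval; this is handled by shrinking the $\mu$-neighbourhood and using $T_f\to\infty$ near $0$.
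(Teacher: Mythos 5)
Your proposal is correct and is essentially the paper's own argument: the paper likewise matches the transit time across fixed neighbourhoods of the origin (its functions $\mu(t)$ and $\nu(t)$ are exactly the inverses of your $T_f$ and $T_g$), uses monotonicity and the divergence of this time as the parameter tends to $0^-$ to define a continuous $N$ with $N(0)=0$, and obtains the $C^k$ conjugacy from the flow box theorem, which is precisely your rectification $u=\int_0^x ds/f(s,\mu)$ and the map $h(\phi_\tau(u_1,\mu))=\psi_\tau(v_1,N(\mu))$. The only quibble is the constant in your asymptotic for $T_f$ (it should be $\pi\sqrt{2}\,/\sqrt{|f_\mu f_{xx}||\mu|}$, not $\pi/\sqrt{2|f_\mu f_{xx}||\mu|}$), which is immaterial since only the divergence is used.
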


\begin{proof}
If $\mu <0$ and $\nu <0$ then
the flows are decreasing in a neighbourhood of the origin. Given neighbourhoods $U=(u_0,u_1)$ and $V=(v_0,v_1)$ 
of zero there exists $T>0$ such that for all $t>T$ there exist parameters $\mu(t)$ and $\nu(t)$ such that 
\[
\phi_t(u_1,\mu(t)) = u_0, \quad \psi_t(v_1,\nu(t))=v_0 \,.
\]
For sufficiently large values of $T$ the functions $\mu(t)$ and $\nu(t)$ are increasing and limit to zero, hence we can invert 
one of them, $t=t(\mu)$, to obtain a correspondence $\nu = \nu(t(\mu)) = N(\mu)$ between $\mu$ and $\nu$.

At corresponding parameters the systems are $C^k$-conjugate on $U$ and $V$ by the $C^k$ flow box theorem \cite[Theorem 1.1]{PdM1982}.
Equivalently, as this is trivial in one dimension,
for all $x\in U$ there exists $\tau$ such that $x = \phi_\tau(u_1,\mu)$, and the $C^k$ conjugating function $y=h(x)$ can be defined by 
\[
h(x) = \psi_\tau(v_1,N(\mu)).
\]
 \end{proof}

\section{Normal forms and Takens' coefficient}\label{sect:normalforms}
In this section we will prove the claims in the introduction. 
The analysis closely follows
those in \cite{GS2022} for the discrete-time setting.
There are four parts to the calculation: an asymptotic computation of the position of the stationary points of the
general system (\ref{eq:gen}), the equivalent computation for the normal form (\ref{eq:nf}),
the use of the implicit function theorem to
identify functions $\nu(\mu)$ and $a(\mu)$ 
for the parameter and coefficient of the normal form at which the corresponding stationary points have
equal multipliers, and finally the use of Theorem~\ref{thm:belit} to establish the existence of 
local differentiable conjugacies. Throughout this section (\ref{eq:gen}) can be used to write the 
differential equation for $f$ (with all partial derivatives evaluated at $x = \mu = 0$ and assuming that $f$ is at least $C^3$) as
\begin{equation}\label{eq:fTaylor}
\dot x = f_\mu \mu +\frac{1}{2} \!\left( f_{xx}x^2+2f_{x\mu}x\mu + f_{\mu\mu}\mu^2 \right)+ \frac{1}{6}f_{xxx}x^3 + \cdots ,
\end{equation} 
and by assumption
\begin{equation}\label{eq:condsgen}
f_\mu >0\quad \textrm{and}\quad f_{xx}<0.
\end{equation}

\subsection{Stationary points of $f$}\label{subsect:spf}
From (\ref{eq:IFTsp}) we know there are two stationary points if $\mu >0$ and that their positions are 
functions of $m=\sqrt{\mu}$. Expanding in power series in $m$, substituting into (\ref{eq:fTaylor}), and
setting the right hand side equal to zero, gives the two solutions $x_{1,2}$ as (for $r=1,2$ below)
\begin{equation}\label{eq:fpf}
x_{r}=(-1)^r \sqrt{\frac{-2f_\mu}{f_{xx}}} \,m+\left(\frac{f_\mu f_{xxx}-3f_{x\mu}f_{xx}}{3f_{xx}^2}\right)m^2+O(m^3),
\end{equation}
assuming $f$ is at least $C^4$.
Differentiating (\ref{eq:fTaylor}) with respect to $x$ and evaluating at $x_r$ gives the multipliers
\begin{equation}\label{eq:fmult}
f'(x_r) = (-1)^{r+1}\sqrt{-2f_\mu f_{xx}} \,m-\frac{2}{3}\frac{f_\mu f_{xxx}}{f_{xx}}m^2+O(m^3).
\end{equation}

\subsection{Stationary points of the normal form}\label{subsect:spg}
The normal form (\ref{eq:nf}) is $\dot y=g(y)$ with
\begin{equation}\label{eq:nfg}
g(y)=\nu -y^2+ay^3.
\end{equation}
Hence 
\[
g_\nu =1, 
\quad g_{yy}=-2, \quad g_{yyy}=6a,
\]
and all other derivatives are zero. From (\ref{eq:fpf}) and (\ref{eq:fmult}) we can read off the values of 
the stationary points and their multipliers with $\nu =n^2 >0$: 
\begin{equation}\label{eq:fpg}
y_{r}= (-1)^r n +\frac{1}{2} a n^2 +O(n^3),
\end{equation} 
and
\begin{equation}\label{eq:gmult}
g^\prime (y_r)= 2 (-1)^{r+1} n+2an^2+O(n^3) .
\end{equation}

\subsection{Equality of the multipliers}\label{subsect:equality}
We require $f'(x_r) = g'(y_r)$ for $r=1$ and $r=2$.
These two equations are
enough to determine $\nu$ and $a$ as functions of $\mu$. However, their leading order 
terms are equal (up to sign) and so in order to use the implicit function theorem
it is necessary to solve two related functions.

Anticipating that $n$ is proportional to $m$, define $p$ by $n=mp$. Now (again a standard trick, see e.g. \cite{Devaney, GS2022}), let
$G_r(a,p,m)=f^\prime (x_r)-g^\prime (y_r)$ and define
\begin{equation}\label{eq:defF1}
F_1(a,p,m)=\begin{cases}\frac{G_1(a,p,m)}{m} & \textrm{if}~m\ne 0,\\ \frac{\partial G_1}{\partial m}(a,p,m) & \textrm{if}~m=0.
\end{cases}\end{equation}
From (\ref{eq:fmult}) and (\ref{eq:gmult}) with $n=pm$
\begin{equation}\label{eq:F1leading}
F_1(a,p,m)=\sqrt{-2f_\mu f_{xx}}-2p +O(m),
\end{equation}
and observe $F_1(a,p,0)=0$ if $p=p_0$ with
\begin{equation}\label{eq:p0}
p_0=\sqrt{-\frac{1}{2}f_\mu f_{xx}}.
\end{equation}
Now let
\begin{equation}\label{eq:defF2}
F_2(a,p,m)=\begin{cases}\frac{G_1(a,p,m)+G_2(a,p,m)}{m^2} & \textrm{if}~m\ne 0,\\ \frac{1}{2}\frac{\partial^2 ~}{\partial m^2}(G_1(a,p,m)+G_2(a,p,m)) & \textrm{if}~m=0.
\end{cases}\end{equation}
Once again, from (\ref{eq:fmult}) and (\ref{eq:gmult}) with $n=pm$
\begin{equation}\label{eq:F2leading}
F_2(a,p,m)=-\frac{4}{3}\frac{f_\mu f_{xxx}}{f_{xx}} -4ap^2 +O(m),
\end{equation}
and observe $F_1(a,p,0)=0$ if $p=p_0$ and $a=a_0$ with
\begin{equation}\label{eq:a0n}
a_0=\frac{1}{p_0^2}\frac{f_\mu f_{xxx}}{3f_{xx}}=\frac{2f_{xxx}}{3f_{xx}^2}.
\end{equation}
With these choices of $p_0$ and $a_0$ 
\[
F_1(a_0,p_0,0)=0, \quad F_2(a_0,p_0,0)=0.
\]
To apply the implicit function theorem the functions $F_1$ and $F_2$ must be at least $C^1$, hence $f$ must be at least $C^4$.
To obtain a unique local solution (valid in $m>0$) the determinant of the matrix $M$ of partial derivatives of $F_1$ and $F_2$ must not vanish.
From (\ref{eq:F1leading}) and (\ref{eq:F2leading})
\[
M = \left. \begin{bmatrix}
\frac{\partial F_1}{\partial a} & \frac{\partial F_1}{\partial p}\\
\frac{\partial F_2}{\partial a} & \frac{\partial F_2}{\partial p}
\end{bmatrix} \right|_{(a,p,m) = (a_0,p_0,0)}=\left(\begin{array}{cc}0 & -2\\ -4p_0^2 & -8a_0p_0\end{array}\right),
\]
so $\det(M) = -8 p_0^2 \ne 0$.
Thus indeed
there is a unique local curve of solutions with $n=p_0m+O(m^2)$, i.e. $\nu=p_0^2\mu + O \big( \mu^{\frac{3}{2}} \big)$,
and $a=a_0+O(m)$.  

\subsection{Local differentiable conjugacies}
We can now put these calculations together with the strategy of section~\ref{sect:diffcon} to determine the 
relation between the original map $f$ and the normal form $g$.

\begin{theorem}If $f$ is $C^k$, $k\ge 4$, then there exist neighbourhoods
$U$ and $V$ of zero and $\mu_0>0$ such that
\begin{enumerate}
\item If $\mu\in (-\mu_0,0)$ then there exists a continuous function $\nu (\mu )$ such that $\dot x=f(x)$ 
on $U$ at $\mu$ is $C^k$ conjugate to $\dot y=g(y)$ on $V$ at $\nu (\mu )$ for any value of $a$.
\item If $\mu =0$ and $f$ is $C^\infty$ then $\dot x=f(x)$ on $U$ is $C^\infty$ conjugate to $\dot y=g(y)$ on $V$ if $a=a_0$.
\item If $\mu\in (0,\mu_0)$ then there are continuous functions $\nu$ and $a$ with
\[\nu=p_0^2\mu + O(\sqrt{\mu}^3)\quad  \textrm{and}\quad  a=a_0+O(\sqrt{\mu}),\]
and $p_0$ and $a_0$ given by (\ref{eq:p0}) and (\ref{eq:a0n}), such that  $\dot x=f(x)$ is $C^k$ conjugate to $\dot y=g(y)$ on the basins of attraction and repulsion of the corresponding stationary points in $U$ and $V$.
\end{enumerate}
\label{thm:diffconjnf}\end{theorem}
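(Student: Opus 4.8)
The plan is to assemble the theorem from its three regimes $\mu<0$, $\mu=0$, $\mu>0$, invoking in each the result already established and using $f$ as the ``first'' family throughout. I would first record the elementary but essential observation that the normal form $g(y)=\nu-y^2+ay^3$ satisfies the genericity conditions \eqref{eq:gen} for \emph{every} value of $a$, since $g_\nu=1>0$ and $g_{yy}=-2<0$. For part (i), where $\mu<0$, both $\dot x=f(x,\mu)$ and $\dot y=g(y,\nu)$ (at any $\nu<0$) have no stationary points near the origin and monotone decreasing flows there, so I would apply Theorem~\ref{thm:negpars} directly: it returns a continuous matching $\nu=N(\mu)$ with $N(0)=0$ and a $C^k$ conjugacy built from the flow box theorem. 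Because that construction uses only monotonicity of the flows and never sees the cubic coefficient, the same $\nu(\mu)=N(\mu)$ works for every $a$, which is precisely the ``for any value of $a$'' clause.

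For part (ii), with $\mu=\nu=0$ and $f$ (hence the reduced equation) $C^\infty$, I would apply Takens' theorem (Theorem~\ref{thm:takens}) to $\dot x=f(x,0)$ to obtain a $C^\infty$ conjugacy onto $\dot w=-w^2+a_0w^3$, where $a_0$ is given by \eqref{eq:a0n}. Choosing $a=a_0$, the normal form $g$ at $\nu=0$ equals this very cubic — its own Takens coefficient being $\tfrac{2g_{yyy}}{3g_{yy}^2}=a_0$ — so it is conjugate to the model via the identity. Composing the two conjugacies gives the required $C^\infty$ conjugacy between $f$ and $g$.

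For part (iii) I would quote the implicit-function-theorem computation of subsection~\ref{subsect:equality}, which supplies the continuous functions $\nu(\mu)=p_0^2\mu+O(\mu^{3/2})$ and $a(\mu)=a_0+O(\sqrt{\mu})$ for which the multipliers of corresponding stationary points agree, $f'(x_r)=g'(y_r)$ for $r=1,2$. For $\mu>0$ small, \eqref{eq:fmult} and \eqref{eq:gmult} show that $f$ and $g$ each have exactly two stationary points — an unstable one $x_1$ with $f'(x_1)\sim\sqrt{-2f_\mu f_{xx}}\,m>0$ and a stable one $x_2$ with $f'(x_2)<0$ — with nonzero, matched multipliers. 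The hypotheses of Theorem~\ref{thm:belit} with $n=2$ are then met, and it delivers local $C^k$ conjugacies on $(x_0,x_2)$ and $(x_1,x_3)$, which are exactly the basin of repulsion of $x_1$ and the basin of attraction of $x_2$.

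The main obstacle, and essentially the only place requiring care, is part (iii): I must check that the solution branch produced in subsection~\ref{subsect:equality} feeds valid data into Theorem~\ref{thm:belit} \emph{uniformly} for all small $\mu>0$, i.e.\ that the matched multipliers stay nonzero and the stationary-point count remains exactly two as $\mu\downarrow0$, both of which follow from the leading-order expansions once $\mu_0$ is small. A final bookkeeping step is to choose single neighbourhoods $U$, $V$ and one threshold $\mu_0>0$ serving all three cases at once, obtained by intersecting the neighbourhoods from the separate arguments and shrinking $\mu_0$ so that the expansions \eqref{eq:fpf}--\eqref{eq:gmult} govern the local geometry.
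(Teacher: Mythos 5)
Your proposal is correct and follows essentially the same route as the paper, which likewise assembles the theorem by citing Theorem~\ref{thm:negpars} for $\mu<0$, Takens' theorem for $\mu=0$, and the implicit-function-theorem matching of multipliers from subsection~\ref{subsect:equality} combined with Theorem~\ref{thm:belit} for $\mu>0$. Your added checks (that $g$ satisfies \eqref{eq:gen} for every $a$, that the two matched stationary points have nonzero multipliers for small $\mu>0$, and the final shrinking of $U$, $V$, $\mu_0$) are details the paper leaves implicit, so they strengthen rather than diverge from its argument.
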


This simply collects together the results established earlier. Case (i) is Theorem~\ref{thm:negpars},
Case (ii) is Takens' theorem (Theorem~\ref{thm:takens}),
and case (iii) follows from the results in this section together with Theorem~\ref{thm:belit}.

Since the value of $a$ is unimportant for the first part of this statement, we may choose $a(\mu )=a_0$ if $\mu <0$ so that
$a(\mu )$ is continuous at $\mu =0$ in the $C^\infty$ case. We conjecture that Theorem~\ref{thm:diffconjnf}(ii) remains true in the $C^k$ case, $k\ge 4$. This conjecture is certainly true in the discrete time case \cite{GS2022,KCG1990}. 

\subsection{A remark on signs}
Although $f_\mu (0,0) >0$ and $f_{xx}(0,0)<0$ may be assumed without loss of generality for a generic saddle-node bifurcation, it is useful to have the constants $p_0$ and $a_0$ in a form which can be applied without making the explicit transformation to this case. Clearly
\begin{equation}\label{eq:p0gen}
p_0^2=\frac{1}{2}|f_\mu f_{xx}|,
\end{equation}
regardless of the signs of $f_\mu$ and $f_{xx}$, but the sign of $a_0$ needs a little more thought. By considering the transformation $x\to -x$ we see that in the differential equation $f_{xx}\to -f_{xx}$ and $f_{xxx}\to f_{xxx}$. Hence
$a_0$ is invariant under the transformation and no adjustment is necessary.

\section{Fraedrich's temperature model}\label{sect:Fraedrichs}
Saddle-node bifurcations are a mechanism for tipping points in climate dynamics.
One such saddle-node bifurcation is exhibited by Fraedrich's model
for temperature, T(t); see \cite{Ashwin2012,Fraedrich} for details and justification.
The temperature variation has a black body radiation term of order $T^4$,
a constant solar warming term, and a $T^2$-term describing the variation of ice albedo with temperature.
The one dimensional model is\footnote{
Ashwin {\em et.~al.}~\cite{Ashwin2012} include an extra factor $\frac{1}{c}$ in the ODE which we believe should be absent,
likely a simple algebra mistake that arose when simplifying equation (4.1) of \cite{Fraedrich}.
}
\begin{equation}\label{eq:Fraedrichs}
\dot{T} = a \!\left( -T^4 + b \mu T^2 - d \mu \right),
\end{equation}  
and the parameters are 
\[
a = \frac{e_{SA}\sigma}{c}, \quad b=\frac{b_2I_0}{4e_{SA}\sigma}, \quad d=\frac{(a_2-1)I_0}{4e_{SA}\sigma}.
\]
Here the more fundamental constants are the Stefan-Boltzmann constant $\sigma$,
the insolation $I_0$, the oceanic thermal capacity $c$, and the effective emissivity $e_{SA}$.
The constants $a_2 > 1$ and $b_2$ model the ice albedo effect: the albedo is $a_2-b_2T^2$,
and $\mu$ is a parameter that describes variations due to changes in the planetary orbit or insolation values.
Reasonable values (taken from \cite{Ashwin2012}) are
\begin{equation}\label{eq:Fraepars}\begin{array}{c}
I_0=1366\, Wm^{-2}, \quad  \sigma= 5.6704 \times 10^{-8}\, Wm^{-2} K^{-4},  \quad c =108\, kgKs^{-2},\\
e_{SA}=0.62, \quad a_2= 1.6927, \quad b_2 = 1.690 \times  10^{-5}\, K^{-2}.
\end{array}\end{equation}
The parameter $\mu$ is order one and
acts as the bifurcation parameter.
  
Stationary points are solutions of the quadratic equation for $T^2$,
\[
T^4-b\mu T^2+d\mu =0,
\]
so the solutions are 
\[
T^2=\frac{1}{2}\!\left( b\mu\pm \sqrt{b^2\mu^2-4d\mu}\right).
\]
Two solutions are created in a saddle-node bifurcation as $\mu$ increases through $\mu_c= \frac{4d}{b^2}$ with temperature
$T_c=\sqrt{\frac{1}{2}b\mu_c} = \sqrt{\frac{2 d}{b}}$.
Now let $f(T,\mu)$ denote the right hand side of \eqref{eq:Fraedrichs}.
A routine calculation shows that at the bifurcation point
\[
f_\mu = a d, \quad
f_{TT} = -\frac{16 a d}{b}, \quad
f_{TTT} = -24 a T_c \,.
\]
This means that the two quantities, $p_0$ and $a_0$, that drive the normal form are related to the basic parameters by
\begin{equation}\label{eq:a0p0}
\begin{split}
p_0^2 &= -\frac{1}{2} f_\mu f_{TT}
= \frac{8 a^2 d^2}{b}
= \frac{2 e_{SA} \sigma I_0 (a_2 - 1)^2}{b_2 c^2}, \\
a_0 &= \frac{2 f_{TTT}}{3 f_{TT}^2}
= -\frac{1}{8 \sqrt{2} a} \left( \frac{b}{d} \right)^{\frac{3}{2}}
= -\frac{c}{8 \sqrt{2} e_{SA} \sigma} \left( \frac{b_2}{a_2 - 1} \right)^{\frac{3}{2}}.
\end{split}
\end{equation}
An interesting feature of this analysis us that Takens' coefficient, $a_0$ has two factors.
The factor $\frac{1}{a}$ represents a scaling in time since in the new time $\tau =at$ the multiplicative factor in (\ref{eq:Fraedrichs}) is scaled to unity. The second (and indeed the last ratio in the expression for $p_0^2$) shows a nonlinear dependence on the level and sensitivity of the albedo effect to changes in global temperature.

\section{Thermohaline circulation: perturbation theory}\label{sect:example2}
Stommel's two-box model of the oceanic circulation \cite{Stommel1961} remains a useful motivating example for the possibility of dramatic non-reversible changes in the climate due to global temperature rise. In the version used by Cessi \cite{Cessi1994} this can be written using a cubic nonlinearity instead of the modulus in the original model, so the non-dimensionalised temperature gradient $x$ and salinity gradient $y$ between the two boxes evolves according to the equations
\begin{equation}\label{eq:stomcessi}
\begin{split}
\dot x &= -\alpha (x-1)-x \!\left(1+m(x-y)^2\right),\\ 
\dot y & = p-y \! \left(1+m(x-y)^2\right).
\end{split}\end{equation}
Here $\alpha$ is a non-dimensionalised time constant, the ratio between the diffusive and temperature relaxation time scales,
and is approximately 3600, $m$ is the ratio of the diffusive time scale to the advective timescale and is approximately 7.5, and $p$ is the non-dimensional freshwater flux with an average of order one. These estimates of magnitude are taken from \cite{Cessi1994} where $p$ is allowed to oscillate stochastically about its mean and $m$ is denoted by $\mu^2$, which we have changed to avoid confusion with general bifurcation parameters. Here we take $p$ to be constant and treat it as the bifurcation parameter (cf. \cite{Stommel1961}). Kuehn \cite{Kuehn2013} describes a slow-fast manifold approach to the same problem.

Since $\alpha$ is much larger than $m$ and $p$, a perturbation theory approach can be taken with solutions expanded as power series in $\alpha^{-1}$ giving
$x=1+O(\alpha^{-1})$ and
\begin{equation}\label{eq:firstorder}
\dot y = p-y\!\left(1+m(1-y)^2\right) +O(\alpha^{-1}).
\end{equation}
Writing $y=y_0+O(\alpha^{-1})$, the perturbation equation for $y_0$ is obtained by ignoring the order $\alpha^{-1}$ terms, so, dropping the subscript $0$ from here on, saddle-node bifurcations of (\ref{eq:firstorder}) at leading order occur if
\begin{equation}\label{eq:consn}
p=y\!\left(1+m(1-y)^2\right) \quad \text{and} \quad 1+m-4my+3my^2=0.
\end{equation}
The second equation of (\ref{eq:consn}), which comes from setting the derivative of the right hand side of (\ref{eq:firstorder}) equal to zero,
is a quadratic for $y=y(m)$.
By solving it for $y$ we obtain
\[
y_\pm (m)=\frac{1}{3}\left(2\pm\sqrt{1-\frac{3}{m}}\right), \quad m>3,
\]
and by substituting this into the first equation of \eqref{eq:consn} we obtain (after simplification)
\begin{equation}\label{eq:locus}
p_\pm =\frac{2}{3}+\frac{2m}{27}\left[ 1 \mp \left(1-\frac{3}{m}\right)^{\frac{3}{2}}\right].
\end{equation}
Note that $p_+<p_-$. If $p\in (p_+,p_-)$ then there are three solutions, see Fig.~\ref{fig:boxModelBifDiag}. The two stable solutions correspond to those with highest and lowest salinity gradient. The ocean circulation currently lies on the solution with higher salinity gradient, and if $p>p_-$ this is the only stable stationary point, lending weight to arguments that higher freshwater flux stabilises the ocean circulation. However, if $p$ decreases through $p_+$ then this solution disappears and the solution would move rapidly to the stationary point with lower gradient and weaker circulation. This would have rapid and severe consequences for weather on the eastern European Atlantic coast. 

\begin{figure}[h!]
\begin{center}
\includegraphics[height=4cm]{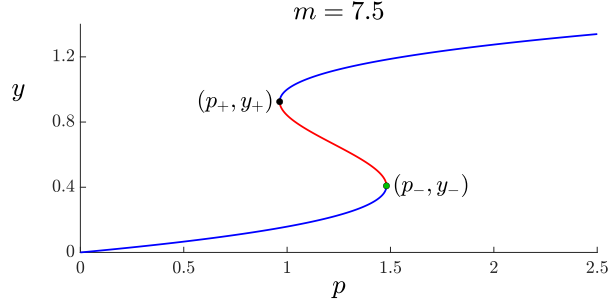}
\caption{
A bifurcation diagram of the box model \eqref{eq:stomcessi}
in the limit $\alpha \to \infty$ and with $m = 7.5$.
Branches of stable [unstable] equilibria are coloured blue [red].
Two saddle-node bifurcations are indicated.
\label{fig:boxModelBifDiag}
} 
\end{center}
\end{figure}

We therefore concentrate on the tipping point $p_+$. If $f$ represents the leading order term in the differential equation (\ref{eq:firstorder}) then
\[
f_p=1, \quad f_{yy}=4m-6my, \quad f_{yyy}=-6m.
\]
Observe $f_p > 0$ and $f_{yy} < 0$ at $y_+$, thus the bifurcation occurs as $p$ increases.
The determining bifurcation numbers
(evaluating at $y_+$) are
\begin{equation}\label{eq:stommelpars}
\begin{split}
p_0^2 &=
-\frac{1}{2} f_p f_{yy}
=\sqrt{m(m-3)}, \\
a_0 &= 
\frac{2 f_{yyy}}{3 f_{yy}^2}
=\frac{-1}{m-3}.
\end{split}
\end{equation}
Thus Takens' coefficient
is large if $m$ is only slightly
greater than 3, and if $m=7.5$ it is approximately $0.22$
which is fairly small so
shows that at the currently assumed parameter values the bifurcation is relatively symmetric.

\begin{figure}[h!]
\begin{center}
\includegraphics[height=9cm]{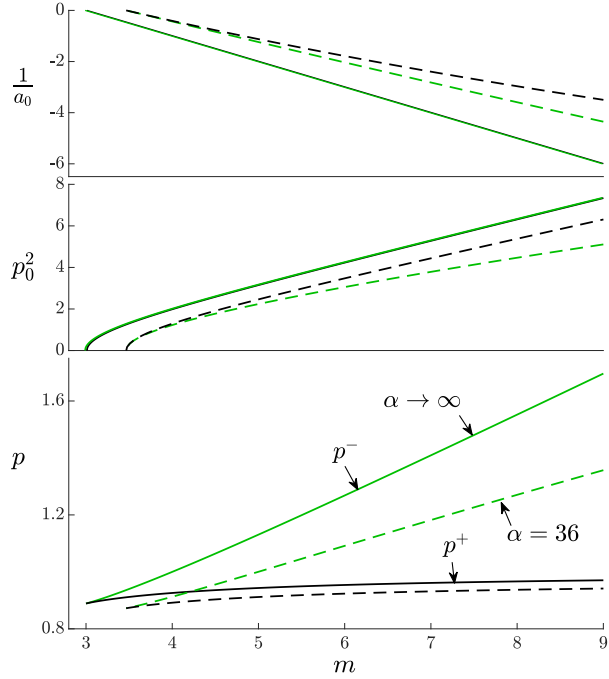}
\caption{
The bottom plot is a two-parameter bifurcation diagram showing branches of saddle-node bifurcations
for the box model \eqref{eq:stomcessi}.
Solid curves correspond to the limit $\alpha \to \infty$ (given explicitly by \eqref{eq:locus});
dashed curves are for $\alpha = 36$ (computed by numerical continuation).
The other plots show $p_0^2 = \frac{1}{2} |f_p f_{yy}|$ and $\frac{1}{a_0}$,
where $a_0 = \frac{2 f_{yyy}}{3 f_{yy}^2}$.
\label{fig:boxModelBifSet}
} 
\end{center}
\end{figure}

The bifurcation numbers are plotted in Fig.~\ref{fig:boxModelBifSet} for both $y_-$ and $y_+$.
The solid curves in the bottom plot are the two branches $p^\pm$ \eqref{eq:locus} of saddle-node bifurcations in the $(m,p)$ parameter plane.
These meet in a cusp bifurcation at $(m,p) = \left( 3, \frac{8}{9} \right)$.
The other plots show $p_0^2$ and $\frac{1}{a_0}$ which
are identical for the two branches.
Notice both $p_0^2$ and $\frac{1}{a_0}$ converge to $0$ at the cusp bifurcation.

\section{Centre Manifolds}\label{sect:cm}
In higher dimensional problems the one-dimensional system describing the saddle-node bifurcation is the projection of the flow onto a centre manifold.
In this section we use a centre manifold reduction to how the bifurcation numbers $p_0$ and $a_0$ can be computed for a two-dimensional system.

Consider a $C^k$ ($k \ge 4$) system
\begin{equation}\label{eq:orig}
\dot{x} = F(x,y,\mu), \quad \dot{y} = G(x,y,\mu),
\end{equation}
where $F(0,0,0) = G(0,0,0) = 0$
and the eigenvalues of the Jacobian matrix of \eqref{eq:orig} evaluated at $(x,y,\mu) = (0,0,0)$ are $0$ and $\lambda \ne 0$.
If $\lambda < 0$ the local centre manifold is stable.
Assume a coordinate transformation has been done so that the Jacobian matrix is in Jordan form, specifically
\begin{equation}
\begin{bmatrix}
\frac{\partial F}{\partial x} & \frac{\partial F}{\partial y} \\
\frac{\partial G}{\partial x} & \frac{\partial G}{\partial y}
\end{bmatrix} =
\begin{bmatrix} 0 & 0 \\ 0 & \lambda \end{bmatrix}.
\nonumber
\end{equation}
Then the system can be written as
\begin{equation}
\begin{split}
\dot{x} &= b_0 \mu + b_1 x^2 + b_2 x y + b_3 y^2 + b_4 \mu x + b_5 \mu y + b_6 \mu^2 + b_7 x^3 + \cdots, \\
\dot{y} &= \lambda y + c_1 x^2 + c_2 x y + c^3 y^2 + c_4 \mu x + c_5 \mu y + c_6 \mu^2 + c_7 x^3 + \cdots,
\end{split}
\label{eq:2dfull}
\end{equation}
where the neglected terms are cubic or higher order in $x$, $y$, and $\mu$,
except we have explicitly written the $x^3$ terms as one of them will be needed below.
There is an extended centre manifold $y = h(x,\mu)$ obtained formally by adding the trivial equation $\dot{\mu} = 0$.
This manifold has the form
\begin{equation}
y = d_1 x^2 + d_2 \mu x + d_3 \mu^2 + O \!\left( (|x|+|\mu|)^3 \right),
\nonumber
\end{equation}
and invariance implies
\begin{equation}
\dot{y} = (2 d_1 x + d_2 \mu) \dot{x} + O \!\left( (|x|+|\mu|)^2 \right).
\nonumber
\end{equation}
By substituting \eqref{eq:2dfull} and equating the coefficients of $x^2$ we find
$d_1 = -\frac{c_1}{\lambda}$
(formulas for $d_2$ and $d_3$ will not be needed).
Then by substituting our expression for the extended centre manifold into \eqref{eq:2dfull}, we obtain
the leading order equation on the centre manifold:
\begin{equation}
\dot{x} = b_0 \mu + b_1 x^2 + b_4 \mu x + b_6 \mu^2 + \left( b_7 - \frac{b_2 c_1}{\lambda} \right) x^3 + \cdots.
\label{eq:leadcm}
\end{equation}
Finally by using the formulas given earlier, where now $f$ denotes the right hand side of \eqref{eq:leadcm},
\begin{equation}
\begin{split}
p_0^2 &= \frac{1}{2} |f_\mu f_{xx}|
= |b_0 b_1|
= \frac{1}{2} |F_\mu F_{xx}|, \\
a_0 &= \frac{2 f_{xxx}}{3 f_{xx}^2}
= \frac{1}{b_1^2} \left( b_7 - \frac{b_2 c_1}{\lambda} \right)
= \frac{1}{3 F_{xx}^2} \left( 2 F_{xxx} + \frac{6 F_{xy} G_{xx}}{\lambda} \right).
\end{split}
\label{eq:p0a0cm}
\end{equation}
Notice \eqref{eq:leadcm} has an additional term in the cubic coefficient
and hence Takens' coefficient $a_0$ is modified by the centre manifold reduction even though the
coefficients $b_0$ and $b_1$ are unaltered.
In other words, Takens' coefficient contains information about the curvature of the centre manifold
and the strength of the linear contraction or expansion in coordinates orthogonal to the centre subspace.

\section{Thermohaline circulation: centre manifold analysis}\label{sect:stommelcm}
Returning to the model of thermohaline convection (\ref{eq:stomcessi}) of section~\ref{sect:example2} the centre manifold
analysis of section~\ref{sect:cm} makes it possible to extend the more refined description of the saddle-node bifurcations away from the large $\alpha$ limit.
This involves three steps. First, the bifurcation values of the parameters and variables need to be determined.
Then a centre manifold reduction is calculated, and finally the coefficients of the equation on the centre manifold need to be evaluated.
Section~\ref{sect:cm} shows that for two-dimensional systems the last step is simple once the transformation to Jordan normal form is made at the bifurcation point:
$p_0$ and $a_0$ can be evaluated directly using (\ref{eq:p0a0cm}).
However, as in many problems the first step is either algebraically messy or impossible.
Therefore in this section we will
also use numerical methods to evaluate $p_0$ and $a_0$.

We start by numerically continuing the branches of saddle-node bifurcations.
With $\alpha = 3600$, the value of $\alpha$ used by Cessi \cite{Cessi1994},
we observed these branches to be indistinguishable,
on the scale of Fig.~\ref{fig:boxModelBifSet},
from the curves (\ref{eq:consn}) obtained earlier.
So for illustration we have instead used $\alpha = 36$
which results in the dashed curves shown in Fig.~\ref{fig:boxModelBifSet}, bottom plot.

At points on these branches the derivatives of the right hand sides of the differential equation can be evaluated.
Then the eigenvectors of the linear part can be used to bring the system into Jordan form
and hence to compute
derivatives in the coordinates of \eqref{eq:2dfull}.
Finally \eqref{eq:p0a0cm} is evaluated numerically.

The values of
$p_0^2$ and $\frac{1}{a_0}$ obtained using this process are shown in Fig.~\ref{fig:boxModelBifSet}, middle and top plots.
Overall the results with the smaller value $\alpha = 36$ are still reasonably close to the $\alpha \to \infty$ limit;
notice the values of $p_0$ and $a_0$ now differ for the two branches.

\section{Conclusion}\label{sect:conclusion}
This paper describes a deeper connection between truncated normal forms and bifurcations. As in the 
discrete time case\cite{GS2022}, the local conjugacy can be made differentiable on the basins of 
attraction and repulsion of the stationary solutions provided the cubic coefficient of the normal form
is chosen judiciously. This means that (for example) rates of convergence are preserved.  The
cubic term is described by Takens' coefficient, and this contains extra information about asymmetry
between the two branches and proximity to cusp bifurcations. We believe that Takens' coefficient should
be calculated as a matter of course in many analyses of saddle-node bifurcations for this extra 
information. The other coefficient identified, $p_0$, describes the speed of the bifurcation. The extra
information in $p_0$ and $a_0$ allow different
saddle-node bifurcations to be compared in a mathematically meaningful way. Since the parameter dependence of 
these coefficients can be calculated explicitly using the functional relationships of section~\ref{sect:normalforms}, this variation, and particularly the speed of variation, can also provide information about the evolution of solutions away from the bifurcation point. 


The other bifurcations on one-dimensional centre manifolds, the transcritical and pitchfork bifurcations,
can be approached in the same way. The details are similar to the discrete time case \cite{GS2022}; we have concentrated on the generic case here.
Note that for the pitchfork bifurcation two extra terms are needed in the truncated normal form (in addition to the standard cubic term) so that stability coefficients can be matched at three stationary points.

The extended truncated normal forms provide significantly more information than the standard simple truncations, 
and have the added advantage that the techniques to identify the relationship between the parameterisations of the
original system and the model are purely algebraic. They rely on solving two or three additional equations, 
solutions of which can be guaranteed by the implicit function theorem. Since the implicit function theorem is 
already a standard tool in bifurcation theory, little extra information beyond Sternberg's linearisation theorem
has been needed to introduce the extended analysis.

\section*{Acknowledgements}
\setcounter{equation}{0}

The authors were supported by Marsden Fund contract MAU1809,
managed by Royal Society Te Ap\={a}rangi.


\end{document}